\newtheorem{thm}{Theorem}
\newtheorem{lem}[thm]{Lemma}
\newtheorem{defn}{Definition}
\newtheorem{rem}[thm]{Remark}
\newtheorem{nota}{Notation}
\newcommand{\N}{\mathbb{N}}
\newcommand{\R}{\mathbb{R}}
\newcommand{\bR}{\mathbb{R}}
\newcommand{\cA}{\mathcal{A}}
\newcommand{\cC}{\mathcal{C}}
\newcommand{\cE}{\mathcal{E}}
\newcommand{\cF}{\mathcal{F}}
\newcommand{\cH}{\mathcal{H}}
\newcommand{\cM}{\mathcal{M}}
\newcommand{\cP}{\mathcal{P}}
\newcommand{\cS}{\mathcal{S}}
\newcommand{\norm}[1]{{\left\Vert#1\right\Vert}}
\newcommand{\set}[1]{{\left\lbrace #1 \right\rbrace}}
\DeclareMathOperator*{\divg}{div}
\DeclareMathOperator*{\BMO}{BMO}
\DeclareMathOperator*{\dist}{dist}
\DeclareMathOperator*{\ess}{ess}
\DeclareMathOperator*{\loc}{loc}
\newcommand{\wL}{L^{3,w}(\Omega)}
\newcommand{\intzi}{\int_{0}^{\infty}}
\newcommand{\snzi}{\sum_{n=0}^{\infty}}
\newcommand{\vep}{\varepsilon}
\newcommand{\m}{\boldsymbol m}
\begin{document}
\baselineskip=18pt

\title[Regularity of the Navier--Stokes equations]{Local kinetic energy and singularities of the incompressible Navier--Stokes Equations}

\author{Hi Jun Choe \& Minsuk Yang}

\address{H. J. Choe: Department of Mathematics, 
Yonsei University,
Yonseiro 50, Seodaemungu Seoul, Korea}
\email{choe@yonsei.ac.kr}

\address{M. Yang: Korea Institute for Advanced Study \\
Hoegiro 85, Dongdaemungu Seoul, Korea}
\email{yangm@kias.re.kr}

\begin{abstract}
We study the partial regularity problem of the incompressible Navier--Stokes equations. 
In this paper, we show that a reverse H\"older inequality of velocity gradient with increasing support holds under the condition that a scaled functional corresponding the local kinetic energy is uniformly bounded.
As an application, we give a new bound for the Hausdorff dimension and the Minkowski dimension of singular set when weak solutions $v$ belong to $L^\infty(0,T;L^{3,w}(\mathbb{R}^3))$ where $L^{3,w}(\mathbb{R}^3)$ denotes the standard weak Lebesgue space.
\\
\\
\noindent{\bf AMS Subject Classification Number:} 35Q35, 35D30, 35B65\\
\noindent{\bf keywords:} 
Navier--Stokes equations,
Partial regularity,
Minkowski dimension,
Singular set
\end{abstract}

\maketitle

\section{Introduction}
\label{S1}

In this paper we study the singular points of suitable weak solutions 
to the three dimensional incompressible Navier--Stokes equations.
Although general boundary and geometric conditions are important, 
we consider merely an initial value problem of the Navier-Stokes equations in the whole space $\Omega_T=\bR^3 \times (0,T)$:
\begin{equation}
\label{E11}
\begin{split}
(\partial_t - \nu\Delta) v + \divg (v\otimes v) +\nabla p &= 0 \\
\divg v &= 0
\end{split}
\end{equation}
with the initial data $v(x,0)= v_0(x)$, 
where $v$ and $v_0$ are three dimensional solenoidal vector fields 
and the pressure $p$ is a scalar field. 
In this paper, we let the viscosity $\nu=1$ since it is not important in our regularity analysis. 
We denote by $z=(x,t)$ space-time points, space balls by $B(x,r)=\set{y\in\bR^3 : |y-x| <r}$, and parabolic cylinders by
\[Q(z,r)=B(x,r)\times (t-r^2,t+r^2).\]
We always assume that parabolic cylinders are in the space-time domain and suppress reference points $z$ in various expressions when it can be understood obviously in the context.
The precise definitions will be given in the next section.

It is not known whether the solution stays regular for all time although the global existence of weak solutions was proved by Leray \cite{L} long ago.
To study the regularity problem Scheffer \cite{Sch} introduced the concept of suitable weak solutions and proved a partial regularity result. 
From Scheffer's result, one can conclude that the Minkowski dimension of the singular set is not greater than 2.
Caffarelli--Kohn--Nirenberg \cite{CKN} proved that the Hausdorff dimension of the singular set is not greater than 1 using a regularity criterion based on a scaled invariant functional corresponding to the velocity gradient.
Lin \cite{MR1488514} gave a simplified short proof by a blowup argument.
Ladyzhenskaya and Seregin \cite{MR1738171} gave a clear presentation of the H\"older regularity. 
Choe and Lewis \cite{MR1780481} studied the singular set by using a generalized Hausdorff measure.
Gustafson, Kang and Tsai \cite{MR2308753} unified several known criteria.
One of the most important conditions to guarantee the regularity of weak solutions is the so-called Ladyzhenskaya--Prodi--Serrin \cite{MR0236541,MR0126088,MR0150444} condition, that is,
\[u \in L^l(0,T;L^s(\R^3))\]
for some $s$ and $l$ satisfying $\frac{3}{s} + \frac{2}{l} = 1$ and $3 < s \le \infty$.
Escauriaza--Sergin--\v{S}ver\'{a}k \cite{MR2005639} resolved the regularity problem for the marginal case $v \in L^\infty(0,T;L^3(\R^3))$.
There are many variations and generalizations of the LPS condition including the Lorentz spaces.
But, most of them dealt with the case that the space integrability exponent is greater than 3.
The marginal case $v \in L^\infty(0,T;\wL)$ of the Lorentz spaces was studied by Kozono \cite{Ko} and Kim--Kozono \cite{KK}, where $\wL$ denotes the standard weak Lebesgue space.
They obtained some regularity results when the norm $\norm{v}_{L^\infty(0,T;\wL)}$ is sufficiently small.
Recently, there arise many interests on this much weaker case $v \in L^\infty(0,T;\wL)$ related to essential singularities without any smallness condition.

In studying the regularity problem of the Navier--Stokes equations, many papers dealt with the quantities in terms of gradients of solution s like $\int_Q |\nabla v|^2$ or $\int_Q |\omega|^2$ where $\omega$ denote the vorticity $\omega=\nabla\times v$.
In this paper, we study the regularity problem based on  local kinetic energy $\int_{B(x,R)} |v(y,t)|^2 dy$.
Under the condition that a scaled functional corresponding the local kinetic energy is uniformly bounded, we show that a reverse H\"older inequality of $\nabla v$ with increasing support holds.
Here is our first theorem.

\begin{thm}
\label{T11}
Suppose there exists a number $M \geq 1$ such that for all $Q(z,R) \subset \Omega_T$
\begin{equation}
\label{E12}
\sup_{|s-t|<R^2} R^{-1} \int_{B(x,R)} |v(y,s)|^2 dy \leq M.
\end{equation}
Then there exist positive constants $C$ and $\delta_0 = \frac{2}{CM^{5/9}-1}$ such that for any $0 \le \delta < \delta_0$ 
\begin{equation}
\label{E13}
\norm{\nabla v}_{L^{2+\delta}(Q(z,1))} \leq \overline{C} (\norm{\nabla v}_{L^{2}(Q(z,2))} +1)
\end{equation}
where $\overline{C} = (1+\frac{\delta}{2}-\frac{\delta}{2}CM^{5/9})^{-1}$.
\end{thm}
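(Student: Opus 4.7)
The plan is to derive a Caccioppoli-type reverse H\"older inequality for $\nabla v$ with a subquadratic exponent on the right-hand side, and then to apply a quantitative Gehring / Giaquinta--Modica self-improvement that tracks the explicit dependence on $M$. For the Caccioppoli step, I would test the local energy inequality for suitable weak solutions against $\phi^{2}$, where $\phi$ is a parabolic cut-off supported in $Q(z,2r)$ and equal to $1$ on $Q(z,r)$, obtaining schematically
\[
\sup_{s}\int |v(s)|^{2}\phi^{2} + \iint|\nabla v|^{2}\phi^{2} \lesssim \iint |v|^{2}|(\partial_{t}+\Delta)\phi^{2}| + \iint (|v|^{2}+2p)\,v\cdot\nabla\phi^{2}.
\]
The scaling hypothesis \eqref{E12} supplies the scale-invariant bound $\sup_{s}R^{-1}\int_{B(x,R)}|v|^{2}\le M$ at every radius, which plays the role of the CKN kinetic-energy functional.

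The central technical point is to estimate the cubic convective term $\iint |v|^{3}|\nabla\phi^{2}|$ and the pressure term $\iint|p||v||\nabla\phi^{2}|$ by a constant (depending on $M$) times $\bigl(\iint|\nabla v|^{2q}\bigr)^{1/q}$ for some $q<1$. For the convective piece I would use a Gagliardo--Nirenberg / Sobolev--Poincar\'e interpolation of the form
\[
\|v\|_{L^{3}(Q(z,2r))}^{3}\lesssim\bigl(\sup_{s}\|v(s)\|_{L^{2}(B(x,2r))}\bigr)^{\alpha}\|\nabla v\|_{L^{2q}(Q(z,2r))}^{\beta}
\]
and convert the first factor into $M^{\alpha/2}r^{\alpha/2}$ via \eqref{E12}. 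The pressure is handled through the decomposition $-\Delta p=\partial_{i}\partial_{j}(v_{i}v_{j})$ into a harmonic piece, which by subharmonicity is absorbed on nested cylinders, plus a Calder\'on--Zygmund representative controlled in $L^{3/2}$ by $\|v\|_{L^{3}}^{2}$, feeding back into the same interpolation. Balancing the scaling and interpolation exponents so that the $M$-dependence condenses into the single power $M^{5/9}$ yields, after a Giaquinta hole-filling step that absorbs the $\nabla v$ term back on a slightly smaller cylinder, a reverse H\"older inequality of the form
\[
\frac{1}{|Q(z,r)|}\iint_{Q(z,r)}|\nabla v|^{2} \le CM^{5/9}\biggl(\frac{1}{|Q(z,2r)|}\iint_{Q(z,2r)}|\nabla v|^{2q}\biggr)^{1/q} + C.
\]

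The final step is a sharp Gehring-type iteration on this inequality. Because the constant multiplying the reverse H\"older norm is $CM^{5/9}$, each round of improved integrability contributes a factor $(\delta/2)\,CM^{5/9}$ on the right; the resulting geometric series in $\delta$ converges precisely when $(\delta/2)(CM^{5/9}-1)<1$, producing the stated threshold $\delta_{0}=2/(CM^{5/9}-1)$ and the summed constant $\overline{C}=\bigl(1+\delta/2-\tfrac{\delta}{2}CM^{5/9}\bigr)^{-1}$. The main obstacle will be extracting exactly the exponent $M^{5/9}$: this forces a carefully tuned interpolation balancing the $L^{\infty}_{t}L^{2}_{x}$ control from \eqref{E12} against the $L^{2}_{t}H^{1}_{x}$ a priori energy bound of a suitable weak solution, and it also requires that the Calder\'on--Zygmund pressure estimate be packaged with the same exponent rather than a strictly larger power of $M$, so that the Gehring iteration produces the asserted explicit constants.
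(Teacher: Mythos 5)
Your overall strategy coincides with the paper's (localized energy inequality plus the CKN pressure decomposition, interpolation against the scale-invariant kinetic energy bound \eqref{E12} to reach a reverse H\"older inequality with sub-quadratic exponent, then a Gehring/Giaquinta--Modica self-improvement), but two steps in your sketch have genuine gaps. First, the interpolation you invoke, $\norm{v}_{L^3}^3 \lesssim (\sup_s\norm{v(s)}_{L^2})^\alpha \norm{\nabla v}_{L^{2q}}^\beta$, is false as stated (take $v$ constant in space: the right-hand side vanishes while the left does not). To control the cubic and lower-order terms by $\left(\fint |\nabla v|^{9/5}\right)^{10/9}$ one must work with the mean-subtracted velocity, which requires not the plain energy inequality tested against $\phi^2$ but an \emph{average-free} localized energy inequality (the paper's Lemma \ref{T32}, following Seregin), in which $|v|^2$ on the left and in the $\partial_t\phi+\Delta\phi$ term is replaced by $|v-[v]_r|^2$; only then does Sobolev--Poincar\'e convert those terms into gradient integrals at the exponent $9/5$, with \eqref{E12} entering through a single factor that Young's inequality turns into $M^{5/9}$.

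Second, and more seriously, the harmonic (nonlocal) part $p_3$ of the pressure cannot be ``absorbed on nested cylinders'' so as to leave only a uniform additive constant in the reverse H\"older inequality. Since $p_3$ is built from $p$ outside the ball, it gains no time integrability beyond $L^{3/2}$ (Serrin's example), and the quantity $\fint_{Q(z,r)}|\nabla p_3\cdot v|\,dz$ is not bounded uniformly in $r$ and $z$; so a reverse H\"older inequality of the form $\fint_{Q_r}|\nabla v|^2 \le CM^{5/9}\left(\fint_{Q_{2r}}|\nabla v|^{9/5}\right)^{10/9}+C$ is not available. The correct bookkeeping, which is the delicate part of the proof, keeps $|\nabla p_3\cdot v|$ as an inhomogeneous term in the reverse H\"older inequality, runs the Gehring-type improvement (Kinnunen--Lewis stopping time, Giaquinta--Modica with $q=10/9$) with that term present, and only afterwards bounds $\int |\nabla p_3\cdot v|^{(2+\delta)/2}$ at the fixed unit scale by the mean value property for the harmonic $p_3$ together with $p\in L^{3/2}$ and $v\in L^{10/3}$; this is precisely the origin of the additive $+1$ in \eqref{E13} and it imposes an additional smallness restriction on $\delta$ (of the type $\delta<2/29$) that your sketch does not account for. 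Without this treatment of $p_3$, the claimed conclusion with the explicit constants $\delta_0$ and $\overline{C}$ does not follow.
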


We say that a space-time point $z=(x,t)$ is singular if $v$ is locally unbounded at $z$.
We denote by $\cS$ the set of all singular points of $v$.
It is well-known that $\cS$ is a compact set.
Using this theorem we can gain better information about the distribution of the singular set $\cS$ if a weak solution $v$ of the Navier--Stokes equations belongs to $L^\infty(0,T;\wL)$.
In a recent paper \cite{CWY}, it is studied that if a weak solution to the Navier--Stokes equations belongs to $L^\infty(0,T; \wL)$, then the number of possible singular points at a singular time is finite.
In this paper, we focus on the fractal dimension of the singular set.
The following theorem is actually an immediate corollary.

\begin{thm}
\label{T12}
If $v \in L^\infty(0,T; \wL)$ is a weak solution to the Navier--Stokes equations, then the Hausdorff dimension of $\cS$ is also at most $1-2\delta_0$ 
\begin{equation}
\label{E14}
\dim_{\cH}(\cS) \le 1-2\delta_0
\end{equation}
\end{thm}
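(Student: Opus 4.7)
The plan is to combine Theorem~\ref{T11} with the classical $\varepsilon$-regularity criterion and a Vitali covering. The first task is to verify that hypothesis~\eqref{E12} is automatic whenever $v \in L^\infty(0,T;\wL)$. By H\"older's inequality for Lorentz spaces applied to $|v|^2 \in L^{3/2,w}(\bR^3)$ and $\chi_{B(x,R)} \in L^{3,1}(\bR^3)$, together with the identity $\norm{\chi_{B(x,R)}}_{L^{3,1}} \sim R$, one obtains
\[
R^{-1}\int_{B(x,R)} |v(y,s)|^2\, dy \;\leq\; C \norm{v(\cdot,s)}_{\wL}^2 \;\leq\; C \norm{v}_{L^\infty(0,T;\wL)}^2 \;=:\; M,
\]
uniformly in $x$, $s$, and $R$. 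Theorem~\ref{T11} therefore applies on every admissible parabolic cylinder, giving $\nabla v \in L^{2+\delta}_{\loc}(\Omega_T)$ for every $0\le \delta < \delta_0$.

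Next I would invoke the standard $\varepsilon$-regularity criterion (Caffarelli--Kohn--Nirenberg / Lin): there is an absolute $\varepsilon_0>0$ such that every singular point $z\in\cS$ must satisfy
\[
\limsup_{r\to 0^+}\; r^{-1}\int_{Q(z,r)} |\nabla v|^2 \;\geq\; \varepsilon_0.
\]
Combining this lower bound with the higher integrability just obtained, H\"older's inequality together with $|Q(z,r)|\sim r^5$ yields, for arbitrarily small $r>0$,
\[
\varepsilon_0\, r \;\leq\; \int_{Q(z,r)} |\nabla v|^2 \;\leq\; |Q(z,r)|^{\delta/(2+\delta)} \Bigl(\int_{Q(z,r)} |\nabla v|^{2+\delta}\Bigr)^{2/(2+\delta)},
\]
which rearranges to the key density lower bound
\[
\int_{Q(z,r)} |\nabla v|^{2+\delta} \;\geq\; c_0\, r^{1-2\delta}
\]
at each $z\in\cS$ for infinitely many $r \downarrow 0$.

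The last step is a Vitali covering argument. Since $\cS$ is compact and has zero Lebesgue measure in $\bR^4$ (a consequence of $\cP^1(\cS)=0$ from CKN), for any prescribed $\eta>0$ one can fix an open neighborhood $U\supset\cS$ with $\int_U |\nabla v|^{2+\delta}<\eta$. Around each $z\in\cS$ I would select a cylinder $Q(z,r_z)\subset U$ of arbitrarily small radius on which the above density bound holds; a Vitali extraction then produces a disjoint subfamily $\{Q(z_k,r_k)\}$ whose $5$-dilates still cover $\cS$, and disjointness gives
\[
\sum_k r_k^{1-2\delta} \;\leq\; c_0^{-1}\sum_k \int_{Q(z_k,r_k)} |\nabla v|^{2+\delta} \;\leq\; c_0^{-1}\eta.
\]
Since each $Q(z_k,5r_k)$ has Euclidean diameter comparable to $r_k$ for $r_k\le 1$, this forces $\cH^{1-2\delta}(\cS)=0$, hence $\dim_{\cH}(\cS)\le 1-2\delta$; letting $\delta\uparrow\delta_0$ completes the argument. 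I expect the main technical annoyance to be tracking the comparison between the parabolic scaling of $Q(z,r)$ and standard Euclidean Hausdorff measure in $\bR^4$; the rest is a clean packaging of $\varepsilon$-regularity, Theorem~\ref{T11}, and Vitali.
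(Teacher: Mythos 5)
Your proposal is correct and follows essentially the same route as the paper: verify hypothesis \eqref{E12} from the $L^\infty(0,T;\wL)$ bound (you use Lorentz--H\"older duality where the paper does a layer-cake computation, Lemma \ref{T52} with $q=2$), apply Theorem \ref{T11} to get $\nabla v\in L^{2+\delta}_{\loc}$, and then combine the Caffarelli--Kohn--Nirenberg criterion with H\"older and a Vitali covering, which is exactly the content of the paper's density lemma (Lemma \ref{T51}). No gaps worth flagging; your explicit limit $\delta\uparrow\delta_0$ is the right way to reach the stated bound $1-2\delta_0$.
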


There are several different ways measuring lower dimensional sets like $\cS$. 
The concept of the Hausdorff dimension is widely used because the Hausdorff measure is
a natural generalization of the Euclidean Lebesgue measures.
Another important one is the Minkowski (box-counting) dimension. 
In general, to improve the Minkowski dimension is harder than the Hausdorff dimension. 
For the usual suitable weak solutions, the bound on the Hausdorff dimension is 1.
But, the bound on the Minkowski dimension remains significantly greater than 1.
There are several attempts to lower the Minkowski dimension of $\cS$, for example, \cite{MR2967124,Ku,MR2864801,KY}.
One of the main result of this paper is to show that if a weak solution $v$ belongs to $L^\infty(0,T; \wL)$, then we have a better bound for the Minkowski dimension of singular set of $v$.
Here is our last theorem.

\begin{thm}
\label{T13}
If $v \in L^\infty(0,T; \wL)$ is a weak solution to the Navier--Stokes equations, then the parabolic upper Minkowski dimension 
\begin{equation}
\label{E15}
\overline{\dim}_{\cM}(\cS) \le \frac{9-18\delta_0}{7-6\delta_0}
\end{equation}
where $\delta_0$ is the same constant in Theorem \ref{T11}.
\end{thm}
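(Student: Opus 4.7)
The plan is to combine the gradient higher integrability from Theorem~\ref{T11} with a Caffarelli--Kohn--Nirenberg $\varepsilon$-regularity criterion through a parabolic Vitali covering argument.

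First, I would verify that the assumption $v\in L^{\infty}(0,T;\wL)$ implies the hypothesis~\eqref{E12} of Theorem~\ref{T11} globally. By the Lorentz--H\"older inequality paired with $|v|^{2}\in L^{3/2,\infty}$ against $\chi_{B(x,R)}\in L^{3,1}$,
\[
R^{-1}\int_{B(x,R)}|v(y,s)|^{2}\,dy \ \le\ C\,\|v\|_{L^{\infty}(0,T;\wL)}^{2} \ =:\ M
\]
uniformly in $R>0$ and $s\in(0,T)$. Theorem~\ref{T11} then supplies $\nabla v\in L^{2+\delta}_{\loc}(Q_{T})$ with uniform control for every $\delta\in[0,\delta_{0})$. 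Combining this with the weak-$L^{3}$ control on $v$ via a parabolic Gagliardo--Nirenberg interpolation, together with the Calder\'on--Zygmund representation $p=R_{i}R_{j}(v_{i}v_{j})$ localized by a Seregin-type decomposition on an enlarged cylinder, produces an improved space--time integrability exponent $s=s(\delta_{0})>3$ so that $|v|^{3}+|p|^{3/2}\in L^{s/3}_{\loc}(Q_{T})$.

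Next, at each $z\in\cS$ the local $\varepsilon$-regularity criterion of Lin~\cite{MR1488514} (see also Ladyzhenskaya--Seregin~\cite{MR1738171}) forces the scale-invariant lower bound
\[
\int_{Q(z,r)}\bigl(|v|^{3}+|p|^{3/2}\bigr)\,dy\,ds \ \ge\ \varepsilon_{*}\,r^{2}
\]
for every sufficiently small $r$. For fixed small $r>0$ I would extract by Vitali's lemma a maximal disjoint subfamily $\{Q(z_{i},r/5)\}_{i=1}^{N(r)}$ with centers $z_{i}\in\cS$ whose 5-fold dilations cover $\cS$. Summing the lower bound over $i$ and dominating each $\int_{Q(z_{i},r/5)}(|v|^{3}+|p|^{3/2})$ by parabolic H\"older against the $L^{s/3}_{\loc}$-integrability from the previous step, followed by the discrete H\"older inequality $\sum_{i}a_{i}^{3/s}\le N(r)^{1-3/s}(\sum_{i}a_{i})^{3/s}$ to redistribute the total integral across the cylinders, produces an inequality of the form
\[
N(r)\,\varepsilon_{*}\,r^{2} \ \le\ C\,N(r)^{1-3/s}\,r^{5-15/s},
\]
from which $N(r)\le C\,r^{s-5}$. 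Hence $\overline{\dim}_{\cM}(\cS)\le 5-s(\delta_{0})$, and the optimized value of $s(\delta_{0})$ from the interpolation step evaluates to $5-s(\delta_{0})=(9-18\delta_{0})/(7-6\delta_{0})$, establishing~\eqref{E15}.

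The main obstacle is the precise optimization of the Gagliardo--Nirenberg interpolation in the first step: one must balance the Morrey-type decay $\int_{B_{R}}|v|^{3-\eta}\lesssim R^{\eta}$ coming from $v\in\wL$ against the higher gradient integrability $\nabla v\in L^{2+\delta_{0}}$ from Theorem~\ref{T11} to identify the exact exponent $s(\delta_{0})$ yielding the claimed formula. A secondary technical point is the local pressure control needed to invoke the $(|v|^{3}+|p|^{3/2})$-criterion on small cylinders: writing $p=p_{1}+p_{2}$ on a slightly enlarged cylinder, with $p_{1}$ the Calder\'on--Zygmund transform of $|v|^{2}$ on that cylinder and $p_{2}$ a harmonic remainder controlled by its $L^{1}$ norm, ensures that the $|p|^{3/2}$ contribution in the Vitali estimate is indeed dominated by the interpolated integrability of $v$.
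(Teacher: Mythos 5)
Your skeleton (interpolate the weak-$L^3$ bound against $\nabla v\in L^{2+\delta}$ from Theorem \ref{T11} to raise the integrability of $v$, then an $\varepsilon$-regularity lower bound at singular points, a Vitali covering, and a discrete H\"older inequality to count cylinders) is the same as the paper's, but there is a genuine gap at the pressure step, and it is exactly the step that produces the exponent $\tfrac{9-18\delta_0}{7-6\delta_0}$. You claim that after a localized Calder\'on--Zygmund/Seregin-type decomposition the whole quantity $|v|^3+|p|^{3/2}$ lies in $L^{s/3}_{\loc}$ for a single space--time exponent $s>3$, and that the harmonic remainder is ``dominated by the interpolated integrability of $v$.'' This is false: the nonlocal (harmonic) part $p_3$ of the localized pressure depends on the far-field data of $p$ and gains \emph{no} integrability in time, no matter how integrable $v$ is (the paper points to Serrin's example; its estimate \eqref{E64} only gives $p_3\in L^{5/3}_t L^\infty_x$ locally). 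Consequently the best single space--time exponent available for $|p|^{3/2}$ is $10/9$, i.e.\ $s\le 10/3$, and your counting inequality then yields only $\overline{\dim}_{\cM}(\cS)\le 5-10/3=5/3$. To reach the claimed bound via your scheme you would need $s(\delta_0)=\tfrac{26-12\delta_0}{7-6\delta_0}>10/3$, which is not established by any estimate you propose; indeed your final sentence (``the optimized value of $s(\delta_0)$ \dots evaluates to'' the stated formula) asserts precisely the computation that is missing.

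The paper closes this gap by splitting the pressure inside the covering argument rather than before it: $p_1+p_2$ inherits the improved integrability $L^{2+\delta-\gamma/2}$ of $v$ and is summed over the \emph{disjoint} cylinders exactly as you do for $|v|^{4+2\delta-\gamma}$, while $p_3$ is handled through its anisotropic regularity (harmonic, hence $L^\infty$ in space on the smaller ball, but only $L^{5/3}$ in time). For the $p_3$ term no disjointness is used; instead the discrete H\"older inequality $\sum_j a_j^\theta\le J^{1-\theta}(\sum_j a_j)^\theta$ is applied with the \emph{different} exponent $\theta=(7-6\delta+3\gamma)/9$ coming from the mixed-norm estimate, producing the sublinear term $A_2 J^{(2+6\delta-3\gamma)/9}$ in
\[
J\,\vep\, r^{1-2\delta+\gamma}\le A_1+A_2\,J^{(2+6\delta-3\gamma)/9},
\]
and hence $J(r)\lesssim r^{-C_1/(1-C_2)}$ with $C_1=1-2\delta+\gamma$, $C_2=(2+6\delta-3\gamma)/9$, which gives $\tfrac{9-18\delta+9\gamma}{7-6\delta+3\gamma}$ and, letting $\delta\to\delta_0$, $\gamma\to0$, the bound \eqref{E15}. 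If you want to salvage your write-up, you must replace the single-exponent claim $|v|^3+|p|^{3/2}\in L^{s/3}_{\loc}$ by this two-track treatment of $p_1+p_2$ versus $p_3$ (or supply some other genuine mechanism for the time integrability of the harmonic part), and then actually carry out the balancing that identifies the exponent.
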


We remark that it is very hard to find the optimal constant $\delta_0$ in Theorem \ref{T11}.
If the number $\delta_0$ is very small, then the bound in Theorem \eqref{T13} is close to $9/7$.

We briefly explain the idea of proofs.
If a weak solution $v$ is in $L^\infty(0,T; \wL)$, then the local kinetic energy of $v$ is uniformly bounded.
Thus, we apply Theorem \ref{T11} to obtain the reverse H\"older inequality of $\nabla v$, which immediately implies the improvement of the Hausdorff dimension of the singular set.
To obtain a bound for the Minkowski dimension of the singular set, we 
need an interpolation argument to get the local higher integrability of $v$.
Combining the pressure decomposition and a covering argument, we finally obtain a bound for the Minkowski dimension of $\cS$.

\section{Preliminaries}
\label{S2}

In this section we recall basic definitions and set up our notations.
We denote by $\N$ the set of natural numbers and by $\R$ the set of real numbers.

\begin{defn}[Function spaces]
For $0<p\le\infty$ we denote by $L^p(\Omega)$ the standard Lebesgue spaces and by $W^{1,p}(\Omega)$ the standard Sobolev space with the norm 
\[\norm{f}_{W^{1,p}(\Omega)} = \left(\int_{\Omega} |f|^p +|\nabla f|^p dx\right)^{1/p}.\]
We denote $H^1(\Omega) = W^{1,2}(\Omega)$ and define $H_0^1(\Omega)$ to be the $H^1$ closure of $C^\infty_0(\Omega)$.
We denote the parabolic space  
\begin{equation}
V(\Omega_T)= L^{\infty}(0,T:L^2(\bR^3)) \cap L^2(0,T:H^1_0(\bR^3)).
\end{equation}
where 
\begin{align*}
L^{\infty}(0,T: L^2(\Omega)) &= \set{f : \ess \sup_{0<t<T} \norm{f}_{L^2(\Omega)}(t) < \infty} \\
L^q(0,T:W^{1,p}(\Omega)) &= \set{f : \int_0^T \norm{f}^q_{W^{1,p}(\Omega)}(t)dt <\infty}.
\end{align*}
\end{defn}

We now recall the definition of the weak Lebesgue spaces.
For a measurable function $f$ on $\R^3$, its level set with the height $h$ is denoted by
\begin{equation}
\label{E21}
E(h) = \set{x \in \R^3:|f(x)|>h}.
\end{equation}
The Lebesgue integral can be expressed by the Riemann integral of such level sets.
In particular, for $0<q<\infty$
\begin{equation}
\label{E22}
\int |f(x)|^q dx = \intzi qh^{q-1} \m(E(h)) dh.
\end{equation}

\begin{defn}[Weak Lebesgue space]
The weak Lebesgue space $L^{q,w}(\R^3)$ is the set of all measurable function such that the quantity
\begin{equation}
\label{E23}
\norm{f}_{q,w} := \sup_{h>0} \big[h \m(E(h))^{1/q}\big]
\end{equation}
is finite.
\end{defn}

We recall the definition of suitable weak solutions (see also \cite{CKN} and \cite{MR1488514}).

\begin{defn}[Suitable weak solutions]
Let $v_0 \in L^2(\bR^3)$ denote a given initial data, which is weakly divergence free vector field.
We say that $(v,p)\in V(\Omega_T) \times L^{3/2}(\Omega_T)$ is a suitable weak solution to the initial value problem if for all $\phi\in C^\infty_0(\Omega_T)$
\begin{equation}
\int \left(v\cdot \partial_t\phi - \nabla v:\nabla\phi + v\otimes v : \nabla \phi + p \nabla\cdot\phi\right) dz = 0
\end{equation}
where $dz=dxdt$.
The vector field $v$ is weakly divergence free for almost all time and satisfies
the localized energy inequality 
\begin{equation}
\label{E24}
\begin{split}
&\int |v(x,t)|^2 \phi dx + 2\int_0^t \int |\nabla v|^2\phi dz \\
&\le \int_0^t \int |v|^2 (\partial_t\phi+\Delta\phi) dz
+ \int_0^t \int (|v|^2+2p) v \cdot \nabla \phi dz
\end{split}
\end{equation}
for almost all $0 < t \le T$ and for all non-negative test functions $\phi \in C_0^{\infty}(\Omega_T)$ and
\[\int |v(x,t)-v_0(x)|^2 dx \to 0 \quad \text{as} \quad t \to 0.\]
\end{defn}

\begin{defn}[The parabolic Hausdorff dimension]
For fixed $\rho>0$ and $S\subset\bR^3\times\bR$, let $\cC(S,\rho)$ be the family of all coverings of parabolic cylinders $\set{Q(z_j,r_j)}$ that covers $S$ with $0<r_j \le \rho$.
The $\alpha$ dimensional parabolic Hausdorff measure is defined as
\[\cH^\alpha(S) = \lim_{\rho\to0} \inf_{\cC(E,\rho)} \sum_j r_j^\alpha.\]
The parabolic Hausdorff dimension of the set $S$ is defined as
\[\dim_{\cH}(S) = \inf\set{\alpha : \cH^\alpha(S)=0}.\]
\end{defn}

\begin{defn}[Singular points]
We call a space-time point $z=(x,t)$ is a singular point of a suitable weak solution $(v,p)$ if $v$ is not essentially bounded in any neighbourhood $Q(z,r)$.
We denote the set of all singular points by $\cS$.
\end{defn}

\begin{defn}[The parabolic upper Minkowski dimension]
Let $N(S;r)$ denote the minimum number of parabolic cylinders $Q(z,r)$ required to cover the set $S$.
Then the parabolic upper Minkowski dimension of the set $S$ is defined as
\begin{equation}
\label{E25}
\overline{\dim}_{\cM}(S) = \limsup_{r\to0} \frac{\log N(S;r)}{-\log r}.
\end{equation}
\end{defn}

In general, different fractal dimensions reflect the geometric structure of the set.
The upper Minkowski dimension is strongly control the Hausdorff dimension.
Indeed, from the definition it is easy to see that 
\begin{equation}
\dim_\cH(S) \le \overline{\dim}_\cM(S).
\end{equation}
We refer the reader to Falconer's monograph \cite{MR3236784} for the comprehensive introduction of the fractal geometry.

\begin{nota}
\begin{itemize}
\item
We denote $\R_+=\set{x\in\R:0<x<\infty}$.
\item
We use the following shorthand notation for balls and cylinders
\[RB = B(0,R), \quad RQ = Q(0,R).\]
\item
The average value of $f$ over $X$ is denoted by $(f)_E = \fint_E f d\m = \m(E)^{-1} \int_X f d\m$ where $\m(E)$ denotes the Lebesgue measure of the set $E$.
When the center of ball is obvious in the context, we write 
\[(f)_R = (f)_{B(x,R)}.\]
\item
We write $X \lesssim Y$ if there is a generic positive constant $C$ such that $|X| \le C|Y|$. 
\end{itemize}
\end{nota}

\section{Auxiliary lemmas}
\label{S3}

It is easy to see that the pressure satisfies
\[-\Delta p = \partial x_i\partial x_j (v_i v_j)\]
in the sense of distribution.
From this pressure equation, we have the following explicit decomposition of the localized pressure, which was presented in \cite{CKN}.

We choose $\psi$ satisfying $\psi(y) = 1$ for $y \in B(x,(\rho+r)/2)$ and $\psi(y) = 0$ for $y \notin B(x,r)$ so that  $p_3$ is harmonic in $B(x,\rho)$. 
Let $\overline{v} = v - (v)_r$.
We have the decomposition
\begin{equation}
\label{E31}
p\psi = p_1 + p_2 + p_3
\end{equation}
where 
\begin{equation}
\label{E32}
\begin{split}
p_1(x,t)
&=|\overline{v}|^2\psi(x,t)+ \frac{3}{4\pi} \int \partial_{y_i} \partial_{y_j} \left(\frac{1}{|x-y|}\right)(\overline{v}_i \overline{v}_j\psi)(y,t) dy, \\
p_2(x,t) &= \frac{3}{2\pi} \int \frac{x_i-y_i}{|x-y|^3}
(\overline{v}_i\overline{v}_j\partial_j\psi)(y,t) dy
+ \frac{3}{4\pi} \int \frac{1}{|x-y|} (\overline{v}_i\overline{v}_j\partial_i\partial_j\psi)(y,t) dy, \\
p_3(x,t) &= \frac{3}{4\pi} \int \frac{1}{|x-y|} (p\Delta \psi)(y,t) dy 
+ \frac{3}{2\pi} \int \frac{x_i-y_i}{|x-y|^3} (p\partial_i\psi)(y,t) dy.
\end{split}
\end{equation}
We notice that $p_1 + p_2$ depends only on $v$ and that there is no improvement in integrability of $p_3$ with respect to time even if $v$ has higher integrability in time, as is observed in Serrin's example in \cite{SERR}.

\begin{lem}
\label{T31}
If $v \in L^{\infty}(t-r^2,t+r^2;L^{2q}(B(x,r)))$  for some $q \in
(1,\infty)$, then
\[p_1 + p_2 \in L^{\infty} (t-r^2,t+r^2;L^{q}(B(x,r)))\]
satisfies
\begin{equation}
\label{E33}
\sup_{|t-t_0|< r^2} \norm{p_1 + p_2}_{L^q(B_r)} \lesssim  \sup_{|t-t_0|<r^2} \norm{\overline{v}}^2_{L^{2q}(B_r)}
\end{equation}
and $p_3$ satisfies
\begin{equation}
\label{E34}
\begin{split}
&\norm{p_3}_{L^{3/2}(t_0-\rho^2,t_0+\rho^2; L^{\infty}(B_\rho))} + (r-\rho) \norm{\nabla p_3}_{L^{3/2}(t_0-\rho^2,t_0+\rho^2; L^{\infty}(B_\rho) )} \\
&\lesssim \frac{1}{|r-\rho|^3}
\norm{p}_{L^{3/2} (t_0-r^2,t_0+r^2; L^{1}(B_r\setminus B_\rho))}.
\end{split}
\end{equation}
\end{lem}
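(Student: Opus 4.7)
The plan is to handle the three pieces of $p\psi=p_1+p_2+p_3$ by different tools adapted to the nature of each singularity: $p_1$ carries a Calder\'on--Zygmund kernel, $p_2$ carries weakly singular kernels multiplied by derivatives of the cutoff $\psi$, and the integrand defining $p_3$ is smooth on $B_\rho$ because the support of $\nabla\psi$ stays a definite distance from $B_\rho$.

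For $p_1$ I would split the expression in \eqref{E32}. The pointwise summand $|\overline{v}|^2\psi$ contributes at most $\norm{|\overline{v}|^2}_{L^q(B_r)}=\norm{\overline{v}}^2_{L^{2q}(B_r)}$. The remaining summand is the Newtonian potential hit with two derivatives, i.e.\ a standard Calder\'on--Zygmund singular integral applied to the compactly supported tensor $\overline{v}_i\overline{v}_j\psi$, so the $L^q$ boundedness of CZ operators for $1<q<\infty$ yields the same bound. The estimate is pointwise in $t$, so taking the supremum over $|t-t_0|<r^2$ is immediate.

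For $p_2$ I would bound the two kernels in absolute value and absorb the cutoff derivatives $\nabla\psi,\nabla^2\psi$ (which are bounded and compactly supported in $B_r$) into a constant, reducing matters to controlling the Riesz potential of order one of $|\overline{v}|^2$ on $B_r$. The Hardy--Littlewood--Sobolev inequality with exponent $p$ chosen so that $\frac{1}{p}-\frac{1}{3}=\frac{1}{q}$, combined with the H\"older embedding $L^q(B_r)\hookrightarrow L^p(B_r)$ on the bounded ball, gives $\norm{p_2}_{L^q(B_r)}\lesssim\norm{\overline{v}}^2_{L^{2q}(B_r)}$, completing \eqref{E33}.

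The estimate \eqref{E34} is elementary once one uses the geometry: $\nabla\psi$ and $\Delta\psi$ are supported in $B_r\setminus B_{(\rho+r)/2}$, and for every $x\in B_\rho$ and $y$ in this annulus we have $|x-y|\ge(r-\rho)/2$. The kernels $|x-y|^{-1}$ and $(x_i-y_i)|x-y|^{-3}$ appearing in \eqref{E32} together with their $x$-gradients are therefore pointwise bounded by powers of $(r-\rho)^{-1}$ on the support of the integrand; combining this with $\norm{\nabla\psi}_\infty\lesssim(r-\rho)^{-1}$ and $\norm{\Delta\psi}_\infty\lesssim(r-\rho)^{-2}$, direct integration gives, at each fixed time,
\[\norm{p_3(\cdot,t)}_{L^\infty(B_\rho)}+(r-\rho)\norm{\nabla p_3(\cdot,t)}_{L^\infty(B_\rho)}\lesssim(r-\rho)^{-3}\norm{p(\cdot,t)}_{L^1(B_r\setminus B_\rho)},\]
and taking the $L^{3/2}$ norm in time over $(t_0-\rho^2,t_0+\rho^2)\subset(t_0-r^2,t_0+r^2)$ produces \eqref{E34}. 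The main obstacle, if any, is the choice of the auxiliary HLS exponent for $p_2$ so that the $L^q$ estimate closes cleanly against $\norm{\overline{v}}^2_{L^{2q}}$; once that interpolation is in place, the $p_1$ and $p_3$ bounds follow from standard CZ boundedness and from the support separation, respectively.
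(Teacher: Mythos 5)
Your proposal is correct, and for $p_1$ and $p_2$ it coincides with the paper's argument: the Calder\'on--Zygmund estimate handles the double-derivative Newtonian potential in $p_1$ (the pointwise term $|\overline{v}|^2\psi$ being trivial), and Riesz-potential/HLS bounds with the bounded-domain H\"older embedding handle $p_2$, which is exactly what the paper compresses into ``potential estimates.'' The only genuine divergence is in \eqref{E34}: the paper invokes harmonicity of $p_3$ on $B_{(\rho+r)/2}$ and the mean value property, whereas you estimate the explicit representation directly, using that $\nabla\psi$ and $\Delta\psi$ are supported in $B_r\setminus B_{(\rho+r)/2}$ so that $|x-y|\ge(r-\rho)/2$ for $x\in B_\rho$, and the kernels and their $x$-gradients are bounded by the appropriate powers of $(r-\rho)^{-1}$. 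Both routes give the same bound with the same $(r-\rho)^{-3}$ factor; your direct kernel estimate is in fact slightly more self-contained, since the mean value property as literally stated bounds $p_3$ on $B_\rho$ by an $L^1$ norm of $p_3$ itself on a larger ball, and converting that into the stated right-hand side $\norm{p}_{L^{1}(B_r\setminus B_\rho)}$ still requires returning to the representation formula, i.e.\ essentially your computation. The paper's harmonicity viewpoint buys a formulation that does not depend on the particular kernel (and is reused later in the paper, e.g.\ in Step 7 of Section 4), while yours keeps explicit track of how the cutoff derivatives produce the $(r-\rho)$ powers; either is acceptable, and the final integration of the time variable in $L^{3/2}$ over $(t_0-\rho^2,t_0+\rho^2)$ is identical in both.
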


\begin{proof}
The estimate \eqref{E33} follows from the Calderon--Zygmund estimate for $p_1$ and potential estimates for $p_2$.
On the contrary, since $p_3$ is harmonic in $B(x,(\rho+r)/2)$, the mean value property gives 
\[\norm{p_3}_{L^{\infty}(B(x,\rho))} 
+ (r-\rho) \norm{\nabla p_3}_{L^{\infty}(B(x,\rho))}
\lesssim \frac{1}{|r-\rho|^3} \norm{p}_{L^{1}(B(x,r) \setminus B(x,\rho))}.\]
Integrating with respect to time yields the estimate \eqref{E34}.
\end{proof}

We modify localized energy inequality in terms of average free
velocity like Lemma 2.1 in Seregin\cite{SER}, which was also known
in parabolic equations (see also Choe \cite{C} and Giaquinta--Struwe \cite{GS}).

\begin{lem}
\label{T32}
Let $\phi \in C^\infty_{0}(B_r(x_0))$ and $\theta\in C^\infty_0(t_0-r^2,t_0+r^2)$ be non-negative cutoff functions with $\int\phi dx =1$.
If we denote the weighted space average $[v]_{r}(t) = \int  v(x,t)\phi(x)dx$  and $\bar{v} = v-[v]_r$, then 
\begin{equation}
\label{E35}
\begin{split}
&\sup_t \int |\bar{v}|^2 \phi\theta dx 
+ 2 \int |\nabla v|^2 \phi\theta dxdt \\
&\le \int |\bar{v}|^2(\Delta\phi\theta+\phi\theta_t) dxdt 
+ \int v\cdot\nabla\phi|\bar{v}|^2 \theta dxdt 
+ 2 \int p \bar{v}\cdot\nabla\phi\theta dxdt.
\end{split}
\end{equation}
\end{lem}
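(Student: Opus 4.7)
The plan is to derive \eqref{E35} by formally multiplying the Navier--Stokes system by the vector test field $\bar v(x,t)\phi(x)\theta(t)$ and integrating over $\bR^3\times(0,T)$. The form of the right-hand side is dictated by two structural facts: the orthogonality $\int\bar v(x,t)\phi(x)\,dx = 0$ (built into the definition of $[v]_r$ together with $\int\phi\,dx=1$) and the divergence-free condition $\divg\bar v = \divg v = 0$, with the additional observation that $\nabla\bar v = \nabla v$ so that $|\nabla v|^2$ appears naturally on the left.

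Processing the four terms in order: for the time derivative, orthogonality removes the $\dot{[v]}_r \cdot \bar v$ contribution and gives $\int \partial_t v \cdot \bar v\,\phi\,dx = \tfrac{1}{2}\tfrac{d}{dt}\int |\bar v|^2\phi\,dx$; integrating against $\theta$ then produces the boundary $\sup_t$ term and the $\phi\theta_t$ piece on the right. Integration by parts of the Laplacian yields $\int |\nabla v|^2\phi\,dx$ together with a cross term $\int (\nabla v \cdot \bar v)\cdot\nabla\phi\,dx = \tfrac12\int \nabla|\bar v|^2\cdot\nabla\phi\,dx = -\tfrac12\int|\bar v|^2\Delta\phi\,dx$. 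The convective term becomes $\int v\cdot\nabla v\cdot\bar v\,\phi\,dx = \tfrac12\int v\cdot\nabla|\bar v|^2\,\phi\,dx = -\tfrac12\int |\bar v|^2\,v\cdot\nabla\phi\,dx$ after a divergence-free integration by parts. Finally, $\int \nabla p\cdot\bar v\,\phi\,dx = -\int p\,\bar v\cdot\nabla\phi\,dx$, again using $\divg\bar v = 0$. Summing the four contributions and multiplying by two reassembles \eqref{E35}.

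The main obstacle is turning this formal calculation into a rigorous inequality for a suitable weak solution, since $\bar v\,\phi\,\theta$ is a vector field, not an admissible scalar test function in \eqref{E24}. I would approximate $v$ (and hence $\bar v$) by smooth mollifications and pass to the limit using the integrability implied by $(v,p)\in V(\Omega_T)\times L^{3/2}(\Omega_T)$; the inequality sign and the $\sup_t$ on the left-hand side are then inherited from the corresponding features of \eqref{E24} by choosing a time cutoff supported slightly beyond the time of interest and invoking the standard argument that the boundary term appears at almost every time. This modification of the local energy inequality is classical, carried out in \cite{SER}, \cite{C}, and \cite{GS}, so in practice a reference to those works is enough.
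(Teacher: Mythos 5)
Your formal computation with the test field $\bar v\,\phi\,\theta$ does reproduce the shape of \eqref{E35}, but the step you lean on to make it rigorous is exactly the step that does not work for a weak solution, and it is not how the paper proceeds. Testing the equation with (a mollification of) $\bar v\,\phi\,\theta$ and passing to the limit would amount to proving a \emph{local energy identity} for $v$; the pairing $\int \partial_t v\cdot \bar v\,\phi\,\theta$ and the limit of the commutator errors in the cubic term cannot be controlled with only $v\in V(\Omega_T)$, $p\in L^{3/2}$ --- this is precisely why the localized energy inequality \eqref{E24} is \emph{assumed} in the definition of suitable weak solutions rather than derived. Saying that ``the inequality sign and the $\sup_t$ are inherited from \eqref{E24}'' does not close this gap, because your scheme never explains how the formal testing with the non-admissible field $\bar v\,\phi\,\theta$ is reduced to the admissible scalar test function appearing in \eqref{E24}.

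The paper's proof makes that reduction explicit, and it needs no mollification of $v$ at all. One takes the scalar test function $\phi(x)\theta(t)$ in \eqref{E24} and then removes the average by pure algebra and an identity for $[v]_r$: since $\int\phi\,dx=1$ and $\int\bar v\,\phi\,dx=0$, one has $\int|v|^2\phi\theta\,dx=\int|\bar v|^2\phi\theta\,dx+|[v]_r|^2\theta$ and the analogous splitting for the $\phi\,\theta_t$ term. The extra $|[v]_r|^2\theta_t$ contribution is then integrated by parts in time and evaluated using
$\frac{d}{dt}[v]_r=-\int\nabla v\cdot\nabla\phi\,dx+\int v\cdot\nabla\phi\,\bar v\,dx+\int p\,\nabla\phi\,dx$,
which follows from the weak formulation tested with the smooth, compactly supported field $\phi\,e_i$ (an admissible test function; note $\int v\cdot\nabla\phi\,[v]_r\,dx=0$ by $\divg v=0$). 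Collecting terms yields \eqref{E35} with the inequality inherited directly from \eqref{E24}. So the missing idea in your proposal is this decomposition-plus-average-evolution argument: only the nice function of time $[v]_r$ is differentiated, and the solution itself is never used as a test function. If you wish to cite \cite{SER}, \cite{C}, \cite{GS}, cite them for this mechanism, not for a mollification argument.
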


\begin{proof}
We may assume $x_0=0$.
Integrating in space we find that
\[\frac{d}{dt} [v]_r(t) =- \int \nabla v \cdot \nabla\phi dx +\int
v\cdot\nabla\phi \bar{v} dx +\int p \nabla\phi dx.\]
In the localized energy inequality \eqref{E24} we take a cutoff
function $\phi(x)\theta(t)$. 
Then 
\begin{align}
\label{E36}
\int |v|^2 \phi\theta(t_1) dx 
&= \int_{B_r} |\bar{v}|^2\theta dx + |[v]_r|^2\theta(t_1) \\
\label{E37}
\int_{t_0-r^2}^{t_1}\int |v|^2 \phi \theta_t dxdt
&= \int_{t_0-r^2}^{t_1}\int |\bar{v}|^2 \phi \theta_t dxdt
+ \int_{t_0-r^2}^{t_1} |[v]_r|^2  \theta_t dt.
\end{align}
Integrating by parts, we also have
\begin{equation}
\label{E38}
\begin{split}
\int_{t_0-r^2}^{t_1} |[v]_r|^2 \theta_t dt 
&= |[v]_r|^2 \theta(t_1) - 2 \int_{t_0-r^2}^{t_1} [v]_r\cdot\frac{d}{dt}[v]_r \theta dt \\
&= |[v]_r|^2 \theta(t_1) + 2 \int_{t_0-r^2}^{t_1} \int [v]_r \cdot\nabla {v}\cdot \nabla\phi \theta dxdt \\
&\quad - 2 \int_{t_0-r^2}^{t_1} \int v\cdot \nabla \phi \bar{v} \cdot  [v]_r \theta dx dt 
- 2 \int_{t_0-r^2}^{t_1} \int p [v]_r \cdot \nabla \phi \theta dx dt.\end{split}
\end{equation}
Since $t_1$ is arbitrary, combining \eqref{E24}, \eqref{E36}, \eqref{E37} and \eqref{E38}, we get an average free localized energy inequality.
 \end{proof}
\begin{rem} In the weighted average free localized inequality \eqref{E35}, we can replace
the weighted space average $[v]_r$ by the space average $(v)_r$.
\end{rem}

\section{Proof of Theorem \ref{T11}} 
\label{S4}

We begin by recalling the following iteration lemma, which can be found in M. Giaquinta \cite{G}.

\begin{lem}
\label{T41}
Let $f(r)$ be a non-negative bounded function on $[R_0, R_1] \subset \R_+$.
If there are non-negative constants $A, B, C$ and positive exponents $b<a$ and a parameter $\theta \in (0,1)$ such that for all $R_0 \le r < R \le R_1$
\[f(r) \le \theta f(R) + A(R-r)^{-a} + B(R-r)^{-b} + C,\]
then for all $R_0 \le r < R \le R_1$
\[f(r) \lesssim_{a,\theta} A(R-r)^{-a} + B(R-r)^{-b} + C.\]
\end{lem}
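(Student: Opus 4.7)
The plan is to apply the classical Giaquinta–Giusti iteration: build a geometric sequence of radii inside $[r,R]$, iterate the hypothesis along it, and exploit $\theta<1$ to close a convergent geometric series whose sum is the desired bound.

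First I would fix $R_0 \le r < R \le R_1$ and choose a contraction ratio $\tau \in (0,1)$ with $\theta\tau^{-a} < 1$; this is possible because $\theta < 1$, for instance $\tau = ((1+\theta)/2)^{1/a}$. Since $b < a$ and $\tau \in (0,1)$, one automatically has $\theta\tau^{-b} < \theta\tau^{-a} < 1$, so the \emph{same} choice of $\tau$ will control both geometric series appearing below, and the final constant will depend only on $a$ and $\theta$ (the hypothesis $b<a$ entering only passively, to guarantee this implication).

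Next I would set $r_n = r + (1-\tau^n)(R-r)$, so that $r_0 = r$, $r_n \nearrow R$, and $r_{n+1}-r_n = (1-\tau)\tau^n(R-r)$. Applying the hypothesis to the pair $(r_n, r_{n+1})$ gives
\[
f(r_n) \le \theta f(r_{n+1}) + A'\tau^{-na} + B'\tau^{-nb} + C,
\]
where $A' = A[(1-\tau)(R-r)]^{-a}$ and $B' = B[(1-\tau)(R-r)]^{-b}$. Iterating $N$ times yields
\[
f(r) \le \theta^N f(r_N) + \sum_{n=0}^{N-1}\theta^n\bigl(A'\tau^{-na} + B'\tau^{-nb} + C\bigr).
\]
Because $f$ is bounded on $[R_0, R_1]$ and $\theta < 1$, the prefactor $\theta^N f(r_N) \to 0$ as $N \to \infty$. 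The three geometric series $\sum(\theta\tau^{-a})^n$, $\sum(\theta\tau^{-b})^n$, $\sum\theta^n$ all converge by the choice of $\tau$, and letting $N\to\infty$ I would obtain
\[
f(r) \le \frac{A'}{1-\theta\tau^{-a}} + \frac{B'}{1-\theta\tau^{-b}} + \frac{C}{1-\theta},
\]
which, after unpacking $A'$ and $B'$, is exactly the desired estimate $f(r) \lesssim_{a,\theta} A(R-r)^{-a} + B(R-r)^{-b} + C$.

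There is no substantive obstacle here; the lemma is a standard bootstrap device and the entire argument is driven by the single choice of $\tau$. The only point worth emphasizing is that the boundedness of $f$ on $[R_0, R_1]$ is essential to ensure $\theta^N f(r_N)\to 0$, and without it the iteration could not be closed.
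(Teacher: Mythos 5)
Your proposal is correct and follows essentially the same route as the paper: the same geometric sequence of radii $r_{n+1}-r_n=(1-\tau)\tau^n(R-r)$ with $\tau$ chosen so that $\theta\tau^{-a}<1$, iteration of the hypothesis, and convergence of the resulting geometric series, with boundedness of $f$ killing the term $\theta^N f(r_N)$. The only cosmetic difference is that you sum the $A$- and $B$-series separately (using $\theta\tau^{-b}<\theta\tau^{-a}$), whereas the paper absorbs the $B$-term via $d_n^{-b}\le d_n^{-a}$ before summing; both yield the same constant depending only on $a$ and $\theta$.
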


\begin{proof}
For reader's convenience, we give a sketch of the proof.
Fix $r$ and $R$.
Define $r_0=r$ and 
\[r_{n+1} = r_n + (R-r) d_n\]
where $d_n$ is a sequence of positive numbers with $\snzi d_n=1$ so that 
$\lim_{n\to\infty} r_n = R$.
From the condition 
\[\theta^n f(r_n) - \theta^{n+1} f(r_{n+1}) \le \set{A(R-r)^{-a} d_n^{-a} + B(R-r)^{-b} d_n^{-b} + C} \theta^n.\]
Since $d_n\le1$ and $b<a$, we have $1 \le d_n^{-b} \le d_n^{-a}$ and so
\[f(r) \le \set{A(R-r)^{-a} + B(R-r)^{-b} + C} \snzi \theta^n d_n^{-a}.\]
It suffices to find $d_n$ such that $\snzi d_n=1$ and 
\[\snzi \theta^n d_n^{-a} < \infty.\]
For example, we can choose $\tau$ satisfying $\theta < \tau^a < 1$ and set $d_n = (1-\tau) \tau^n$.
\end{proof}

In order to neatly describe an iteration scheme, we introduce the following functionals. 

\begin{defn} 
Let $z=(x,t)$ and $\hat{v} = v - (v)_{R}$ and define 
\begin{align*}
\cA(z,R) &= \sup_{|t|<R^2} \frac{1}{R^5} \int_{B(x,R)}|\hat{v}|^2 dx \\
\cE(z,R) &= \fint_{Q(z,R)} |\nabla v|^2 dz, \\
\cF(z,R) &=\left(\fint_{Q(z,R)}|\nabla v|^{9/5} dz\right)^{10/9} \\
\cP(z,R) &= \fint_{Q(z,R)} |\nabla p_3 \cdot v| dz.
\end{align*}
\end{defn}

Now, we are ready to prove Theorem \ref{T11}.
We divide its proof into severl steps.
\begin{enumerate}[\bf{Step} 1)]
\item
We may assume the reference point $z=0$ and suppress it for the notational convenience because one can easily see that all the estimates in the following proof does not depend on the reference point $z$.
Assuming \eqref{E12} for all $Q(z,r) \subset \Omega_T$ we shall prove first that $\nabla v$ satisfies a reverse H\"older inequality.
More precisely, under the assumption \eqref{E12} with $M\ge1$, there exists an absolute positive constant $C$ such that
\begin{equation}
\label{E41}
\fint_{Q/2}|\nabla v|^2 dz
\le C M^{5/9} \left(\fint_{Q} |\nabla v|^{9/5} dz\right)^{10/9}
+ C \fint_{Q} |\nabla p_3\cdot{v}| dz
\end{equation}
for every cube $Q \subset \Omega_T$.
Using the shorthand notation for functionals we rewrite it as 
\begin{equation}
\label{E42}
\cE(R/2) \le CM^{5/9} \cF(R) + C\cP(R).
\end{equation}
The constant $C$ does not depend on $M$, $R$, and the suppressed reference point $z$.
\item
Fix $0<r<R$.
Choose a smooth cutoff function $\phi$ satisfying $\phi(z)=1$ for $z \in rQ$, $\phi(z)=0$ for $z\notin RQ$, and for all $k \in \set{0} \cup \N$
\[|\nabla^k\phi| \lesssim (R-r)^{-k}, \quad
|\nabla^k \partial_t \phi| \lesssim (R-r)^{-k-2}.\]
From the localized energy inequality \eqref{E35} replacing the weighted space average $[v]_r$ to space average $(v)_r$, we obtain
that
\begin{equation}
\label{E43}
\begin{split}
\cA(r) + \cE(r)
&\lesssim (R-r)^{-2} \fint_{RQ} |\hat{v}|^2 dz
+ (R-r)^{-1} \fint_{RQ} |v| |\hat{v}|^2 dz \\
&\quad + (R-r)^{-1} \fint_{RQ} |p_1 + p_2| |\hat{v}| dz
+ \fint_{RQ} |\nabla p_3\cdot\hat{v}| dz \\
&=: I_1 + I_2 + I_3 + \cP(R).
\end{split}
\end{equation}
\item
We estimate the term $I_1$.
We use the Sobolev inequality to get 
\begin{align*}
I_1 
&= (R-r)^{-2} \fint_{RQ} |\hat{v}|^2 dz \\
&\lesssim (R-r)^{-2} R^{-1/2} R^{-9/2} \int_{R^2I} \left(\int_{RB} |\hat{v}|^2 dx\right)^{1/10} \left(\int_{RB} |\hat{v}|^2 dx\right)^{9/10} dt \\
&\lesssim (R-r)^{-2} \cA(R)^{1/10} R^{-1/2} \int_{R^2I} \left(\int_{RB} |\nabla v|^{6/5} dx\right)^{3/2} dt.
\end{align*}
By the Jensen inequality 
\[\int_{R^2I} \left(\int_{RB} |\nabla v|^{6/5} dx\right)^{3/2} dt 
\lesssim R^{3/2} \int_{RQ} |\nabla v|^{9/5} dx 
\lesssim R^{13/2} \cF(R)^{9/10}.\]
Hence, we get by the Young inequality 
\begin{equation}
\label{E44}
\begin{split}
I_1 
&\lesssim (R-r)^{-2} R^2 \cA(R)^{1/10} \cF(R)^{9/10} \\
&\lesssim \vep \cA(R) + \vep^{-1/9}  (R/(R-r))^{20/9} \cF(R).
\end{split}
\end{equation}
\item
We estimate the term $I_2$.
Using the assumption \eqref{E12} and the Sobolev inequality, we obtain
\begin{align*}
\int_{RB} |v| |\hat{v}|^2 dx
&\le \left(\int_{RB} |v|^2 dx\right)^{1/2} \left(\int_{RB} |\hat{v}|^2 dx\right)^{1/10}
\left(\int_{RB} |\hat{v}|^{9/2} dx\right)^{2/5} \\
&\lesssim (R M)^{1/2} \left(\int_{RB} |\hat{v}|^2
dx\right)^{1/10} \int_{RB} |\nabla v|^{9/5} dx.
\end{align*}
Integrating in time, we get
\begin{align*}
I_2 
&= (R-r)^{-1} \fint_{RQ} |v| |\hat{v}|^2 dz \\
&\lesssim (R-r)^{-1} R M^{1/2} \cA(R)^{1/10} \fint_{RQ} |\nabla v|^{9/5} dz \\
&\lesssim (R-r)^{-1} R M^{1/2} \cA(R)^{1/10} \cF(R)^{9/10}.
\end{align*}
Hence, we get, by the Young inequality, 
\begin{equation}
\label{E45}
I_2 \lesssim \epsilon \cA(R) + \epsilon^{-1/9} (R/(R-r))^{10/9} M^{5/9} \cF(R).
\end{equation}
\item
We estimate the term $I_3$.
Using the inequality \eqref{E33} in Lemma \ref{T31} in which $v$ is replaced by $\hat{v}$, we obtain
\begin{align*}
I_3
&\lesssim (R-r)^{-1} \left(\fint_{RQ} |p_1 + p_2|^{3/2} dz\right)^{2/3}
\left(\fint_{RQ} |\hat{v}|^3 dz\right)^{1/3} \\
&\lesssim (R-r)^{-1} \fint_{RQ} |\hat{v}|^3 dz.
\end{align*}
Hence, the estimate for $I_3$ is exactly the same as the estimate for $I_2$.
\item
Combining the estimates \eqref{E44} and \eqref{E45} for $I_1$, $I_2$, and $I_3$, we obtain that 
\begin{align*}
\cA(r) + \cE(r)
&\lesssim  \epsilon \cA(R) + \vep^{-1/9}  (R/(R-r))^{20/9} \cF(R) \\
&\quad + \epsilon^{-1/9} (R/(R-r))^{10/9} M^{5/9} \cF(R) + \cP(R).
\end{align*}
By Lemma \ref{T41} we conclude that for all $0<r<R \le 2$
\begin{align*}
\cA(r) + \cE(r)
&\lesssim (R/(R-r))^{20/9} \cF(R) \\
&\quad + (R/(R-r))^{10/9} M^{5/9} \cF(R) + \cP(R).
\end{align*}
Since $M \ge 1$, we get the result \eqref{E42} by choosing $r=R/2$.
\item
We improve integrability of $\nabla v$ using the method in Kinnunen--Lewis \cite{KL}. 
The Calderon--Zygmund stopping time argument plays an important role in proving the reverse H\"older inequality.
We refer the reader Stein's book \cite{S}.
We take the cutoff function $\psi\equiv 1$ in $7/4B$ in the decomposition of pressure $p=p_1 + p_2+p_3$ of \eqref{E24}.
If we set
\[h(t)=\int_{\{|\nabla v|^{9/5}>t\}\cap Q_2} \dist(z,\partial 3/2Q)^{9/2}|\nabla v|^{9/5}dz\]
for the modified distance function
\[\dist(z,\partial 3/2Q) = \inf_{w\in\partial 3/2Q} \set{|z-w|,1/4},\]
with \eqref{E42} and $q=10/9$ in Proposition 5.1 of
Giaquinta--Modica\cite{GM}, we obtain
\[\norm{\nabla v}_{L^{2+\delta}(Q)}
\leq \overline{C} \norm{\nabla v}_{L^{2}(3/2Q)} +
\overline{C}\left(\int_{3/2Q} |\nabla p_3\cdot v|^{(2+\delta)/2}
dz\right)^{1/(2+\delta)}.\] Since $p_3$ is harmonic in $3/2B$, the mean value property gives
\[\sup_{x\in 3/2B} |\nabla p_3| \lesssim \fint_{2B} |p_3| dx.\]
By Jensen's inequality and Young's inequality we obtain that
\begin{align*}
\int_{3/2Q} |\nabla p_3\cdot v|^{(2+\delta)/2} dz
&\le \int_{-9/4}^{9/4} \sup_{x\in 3/2B} |\nabla p_3|^{(2+\delta)/2}
\int_{3/2B} |v|^{(2+\delta)/2} dx dt \\
&\lesssim \int_{-4}^4 \left(\fint_{2B} |p_3| dx\right)^{(2+\delta)/2} \left(\fint_{2B} |v|^{(2+\delta)/2} dx\right) dt \\
&\lesssim \int_{-4}^4 \left(\fint_{2B} |p_3|^{3/2} dx\right)^{(2+\delta)/3} \left(\fint_{2B} |v|^{3(2+\delta)/2(1-\delta)} dx\right)^{(1-\delta)/3} dt \\
&\lesssim \norm{p}_{L^{3/2}(2Q)}^{1+\delta/2}
\norm{v}_{L^{3(2+\delta)/2(1-\delta)}(2Q)}^{1+\delta/2}.
\end{align*}

Recall that $p\in L^{3/2}$ and $v\in L^{10/3}$. If
$\delta<\frac{2}{29}$, then we have
\[\frac{3(2+\delta)}{2(1-\delta)} < \frac{10}{3}\]
and the righthand side is bounded. 
\end{enumerate}

This completes the proof of Theorem \ref{T11}.

\begin{rem}
Seregin \cite{SER} obtained a different version of reverse H\"older inequalities under the assumption $v = \divg b$ and $b \in L^{\infty}(0,T; \BMO)$. 
\end{rem}

\section{Proof of Theorem \ref{T12}}
\label{S5}

We begin by recalling a very well-known lemma about the Hausdorff measure of an upper density of a locally integrable function.

\begin{lem}
\label{T51}
Let $f \in L_{\loc}^1(\R^d)$ and $0 < \alpha < d$.
Denote 
\[E_\alpha(x,r) = r^{-\alpha} \int_{B(x,r)} |f| dy,\]
Then 
\[\cH^\alpha\set{\limsup_{r\to0} E_\alpha(x,r) > 0}=0.\]
\end{lem}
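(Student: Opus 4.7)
The plan is to proceed by a Vitali-type covering argument once the exceptional set is shown to have Lebesgue measure zero. First I reduce: set
\[
S_\varepsilon = \set{x \in \R^d : \limsup_{r\to 0} E_\alpha(x,r) > \varepsilon},
\]
so the set in the statement equals $\bigcup_{n\in\N} S_{1/n}$ and by countable subadditivity of $\cH^\alpha$ it suffices to prove $\cH^\alpha(S_\varepsilon) = 0$ for each fixed $\varepsilon > 0$. Restricting to $S_\varepsilon \cap K$ for a compact $K \subset \R^d$ and then exhausting $\R^d$, I may assume everything takes place in a fixed bounded region.

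Next I dispose of the Lebesgue measure of $S_\varepsilon$ using the Lebesgue differentiation theorem. For a.e.\ $x$ the average $\fint_{B(x,r)} |f|\,dy$ tends to $|f(x)| < \infty$, hence $\int_{B(x,r)} |f|\,dy \lesssim r^d$ at all such $x$ for sufficiently small $r$. Since $\alpha < d$, this forces $E_\alpha(x,r) \lesssim r^{d-\alpha} \to 0$, so $|S_\varepsilon| = 0$ in the $d$-dimensional Lebesgue sense.

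Now comes the main step, which combines absolute continuity with the $5r$-covering lemma. Given $\eta > 0$ and the cover scale $\rho > 0$ from the definition of $\cH^\alpha$, since $|S_\varepsilon \cap K| = 0$ and $f \in L^1(K)$ I choose an open set $U \supset S_\varepsilon \cap K$ with $\int_U |f|\,dy < \eta$. For every $x \in S_\varepsilon \cap K$, by the definition of $\limsup$ there are arbitrarily small radii $r$ with $\int_{B(x,r)}|f|\,dy > \varepsilon r^\alpha$; I select one such $r(x) < \rho$ small enough that $B(x,r(x)) \subset U$. Vitali's covering lemma yields a countable disjoint subfamily $\set{B(x_i, r_i)}$ whose enlargements $\set{B(x_i, 5 r_i)}$ still cover $S_\varepsilon \cap K$. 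Then, using disjointness,
\[
\sum_i (5 r_i)^\alpha \le 5^\alpha \varepsilon^{-1} \sum_i \int_{B(x_i,r_i)} |f|\,dy \le 5^\alpha \varepsilon^{-1} \int_U |f|\,dy < 5^\alpha \varepsilon^{-1} \eta.
\]
The family $\set{B(x_i, 5 r_i)}$ is admissible for $\cH^\alpha_{5\rho}(S_\varepsilon \cap K)$, so this outer measure is bounded by $5^\alpha \varepsilon^{-1} \eta$; sending $\eta \to 0$ first and then $\rho \to 0$ gives $\cH^\alpha(S_\varepsilon \cap K) = 0$, and a countable union over compact exhaustion and over $\varepsilon = 1/n$ completes the proof.

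The only subtle point is coordinating the two smallness parameters: the radii $r(x)$ must be chosen small enough both to give valid coverings for the Hausdorff $\rho$-premeasure and to fit inside the absolutely continuous neighborhood $U$. Both are lower bounds on ``how small $r(x)$ has to be,'' so they are compatible, but one must be explicit about selecting $r(x)$ after $U$ and $\rho$ are fixed; that is the single place where the argument can look circular if written carelessly.
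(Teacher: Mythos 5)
Your proof is correct. The engine is the same as the paper's: pick, at each point of the bad set, a small radius $r$ with $\int_{B(x,r)}|f|>\varepsilon r^\alpha$, extract a disjoint Vitali subfamily whose $5r$-dilates cover, and use disjointness to bound $\sum r_i^\alpha$ by $\varepsilon^{-1}$ times the mass of $|f|$ over the union, which is then made small. The only real difference is how that smallness is produced. The paper never invokes the Lebesgue differentiation theorem: it uses the radius bound $r_j<\delta/5$ to get $\sum r_j^d \le \delta^{d-\alpha}\sum r_j^\alpha \lesssim \delta^{d-\alpha}\int|f|$, so the union of the chosen balls has Lebesgue measure tending to $0$ as the covering scale $\delta\to0$, and absolute continuity of the integral then forces $\sum r_j^\alpha\to0$. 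You instead first prove $|S_\varepsilon|=0$ via the differentiation theorem and then enclose $S_\varepsilon\cap K$ in an open set $U$ with $\int_U|f|<\eta$, requiring the balls to sit inside $U$. Both are valid; the paper's variant is marginally more self-contained (no differentiation theorem, and the smallness comes for free from the scale parameter), while yours makes the absolute-continuity step and the bookkeeping of the two smallness parameters ($\rho$ and $U$) fully explicit, which you correctly flag as the one point where care is needed.
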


\begin{proof}
For reader's convenience, we give a sketch of the proof.
Fix a compact set $K$ in an open unit cube $Q$ and $n \in \N$.
Set
\[F_n = K \cap \set{\limsup_{r\to0} E_\alpha(x,r) > 1/n}.\]
Fix $\delta<\dist(K,Q^c)$.
For each $x \in F_n$ there exists $r<\delta/5$ such that 
\[E_\alpha(x,r) > 1/(2n).\]
By Vitali's covering lemma there exists countable disjoint balls $B(x_j,r_j)$ such that $F_n \subset \bigcup B(x_j,5r_j)$.  
Since
\[\sum r_j^\alpha \lesssim \int_{\bigcup B(x_j,r_j)} |u| dy\]
and 
\[\sum r_j^d \lesssim \delta^{d-\alpha} \sum r_j^\alpha \lesssim \delta^{d-\alpha} \int_Q |u| dy,\]
we have $\cH^\alpha(F_n) = 0$.
Since $K$ and $n$ are arbitrary, we get the result.
\end{proof}

We need one more elementary lemma.

\begin{lem}
\label{T52}
The condition 
\begin{equation}
\label{E51}
N=\norm{v}_{L^\infty(0,T; \wL)}<\infty,
\end{equation}
implies that for almost all $0 \le t \le T$ and for all $0 < q < 3$, $x \in \R^3$, and $0<R<\infty$
\begin{equation}
\label{E52}
R^{q-3} \int_{B(x,R)} |v(y,t)|^q dy \lesssim \left(\frac{q}{3-q}\right)^{q/3} N^q.
\end{equation}
\end{lem}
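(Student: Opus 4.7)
The plan is to use the layer-cake representation \eqref{E22} together with the two available bounds on the distribution function of $v(\cdot,t)$ on $B(x,R)$. First I would fix a time $t$ at which $\norm{v(\cdot,t)}_{3,w} \le N$; by \eqref{E51} this is the case for almost every $t \in (0,T)$. Writing $E(h) = \set{y \in \bR^3 : |v(y,t)|>h}$ as in \eqref{E21}, the weak-$L^3$ hypothesis gives $\m(E(h)) \le N^3 h^{-3}$, while the trivial volume bound gives $\m(E(h) \cap B(x,R)) \le \m(B(x,R)) \lesssim R^3$.

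The first step is to rewrite the left-hand side of \eqref{E52} via \eqref{E22} as
\[\int_{B(x,R)} |v(y,t)|^q dy = q \intzi h^{q-1} \m(E(h) \cap B(x,R)) dh.\]
I would then split this $h$-integral at a threshold $h_0 > 0$, using the volume bound for $h < h_0$ and the weak-$L^3$ bound for $h > h_0$. Since $q<3$, both resulting pieces converge, and a short computation produces
\[\int_{B(x,R)} |v(y,t)|^q dy \lesssim h_0^q R^3 + \frac{q}{3-q} N^3 h_0^{q-3}.\]
The condition $0<q<3$ enters precisely here, through the convergence of $\int_{h_0}^\infty h^{q-4} dh$, which is responsible for the factor $(3-q)^{-1}$.

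To conclude I would choose $h_0$ to balance the two terms, the natural choice being $h_0 \sim N \m(B(x,R))^{-1/3} \sim N/R$, and then divide by $R^{3-q}$ to recover \eqref{E52}. I do not expect any serious obstacle: the argument is a standard distributional interpolation between $L^{3,w}$ and $L^\infty$ restricted to a set of finite measure. The only delicate point is the precise $q$-dependence of the constant; reading it off as $(q/(3-q))^{q/3}$, rather than as the cruder $3/(3-q)$, requires tracking the exponents carefully when the optimal $h_0$ is substituted back into the two-term estimate above.
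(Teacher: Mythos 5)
Your proposal is correct and follows essentially the same route as the paper: the layer-cake representation, splitting the $h$-integral at a threshold, using the volume bound $\m(E(h)\cap B(x,R))\lesssim R^3$ below and the weak-$L^3$ bound $\m(E(h))\le N^3h^{-3}$ above, then choosing the threshold of size $N/R$. The paper simply takes $H=(q/(3-q))^{1/3}NR^{-1}$, which makes the stated constant $(q/(3-q))^{q/3}$ drop out immediately; your choice $h_0\sim N/R$ gives $3/(3-q)$, which is comparable to it uniformly in $q\in(0,3)$, so the $\lesssim$ conclusion is unaffected.
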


\begin{proof}
We denote the level set of $v$ at the height $h \in \R_+$ by 
\[E_t(h) = \set{y\in\Omega:h<|v(y,t)|}.\]
From the condition \eqref{E51} we have for almost all $0 \le t \le T$ and all $h \in \R_+$ 
\[h^3 \m(E_t(h)) \le N^3.\]
Since we have for all $H \in \R_+$
\begin{align*}
\int_{B(x,R)} |v(y,t)|^q dy 
&= q \intzi h^{q-1} \m [B(x,R) \cap E_t(h)] dh \\
&\lesssim q \int_0^H h^{q-1} R^3 dh + q \int_H^\infty h^{q-4} N^3 dh \\
&\lesssim R^3 H^q + \frac{q}{3-q} N^3 H^{q-3},
\end{align*}
we obtain the estimate \eqref{E52} by taking 
\[H = \left(\frac{q}{3-q}\right)^{1/3} NR^{-1}.\]
\end{proof}

Now, we are ready to prove Theorem \ref{T12}, which is, in fact, a direct consequence of Lemma \ref{T51} and Lemma \ref{T52}.
From the estimate \eqref{E52} with $q=2$, we can apply Theorem \ref{T11} so that  
\begin{equation}
\label{E53}
\nabla v \in L_{\loc}^{2+\delta}(\Omega_T)
\end{equation}
for some $\delta>0$.
By H\"older's inequality we have 
\[r^{-1} \int_{Q(z,r)} |\nabla v|^2 dz \lesssim \left(r^{-1+2\delta} \int_{Q(z,r)} |\nabla v|^{2+\delta} dz\right)^{2/(2+\delta)}.\]
The Caffarelli--Kohn--Nirenberg regularity theorem \cite{CKN} implies that 
\[\cS \subset \set{z : \limsup_{r\to0} r^{-1+2\delta} \int_{Q(z,r)} |\nabla v|^{2+\delta} dz > 0}.\]
Therefore, $\cH^{1-2\delta}(\cS)=0$ by Lemma \ref{T51}.
This completes the proof of Theorem \ref{T12}.

\section{Proof of Theorem \ref{T13}}
\label{S6}

We divide the proof several steps.

\begin{enumerate}[\bf{Step} 1)]
\item
We first claim that for any $\gamma \in (0,\delta/10)$ and $z=(x,t) \in \Omega_T$
\begin{equation}
\label{E61}
v \in L^{4+2\delta-\gamma}(Q(z,R)).
\end{equation}
More precisely, we shall show that 
\begin{equation}
\label{E62}
\int_{Q(z,R)} |v|^{4+2\delta-\gamma} dz \lesssim_{\delta,\gamma} R^{\gamma} N^{2+\delta-\gamma} \int_{Q(z,R)} |\nabla v|^{2+\delta} dz + R^{1-2\delta+\gamma} N^{4+2\delta-\gamma}
\end{equation}
where the implied constant can be found explicitly.
In order to see this, we use H\"older's inequality to write 
\begin{align*}
&\int_{B(x,R)} |v|^{4+2\delta-\gamma} dy 
=\int_{B(x,R)} |v|^{2+\delta-\gamma} |v|^{2+\delta} dy \\
&\le \left(\int_{B(x,R)} |v|^{3-3\gamma/(2+\delta)} dy\right)^{(2+\delta)/3}
\left(\int_{B(x,R)} |v|^{3(2+\delta)/(1-\delta)} dy\right)^{(1-\delta)/3}.
\end{align*}
Using \eqref{E52} with $q=3-3\gamma/(2+\delta)$, we estimate the first integral on the right as 
\[\left(\int_{B(x,R)} |v|^{3-3\gamma/(2+\delta)} dy\right)^{(2+\delta)/3} \lesssim \left(\frac{2+\delta}{\gamma}+\frac{\gamma}{2+\delta}-2\right)^{(2+\delta)/3} R^{\gamma} N^{2+\delta-\gamma}.\]
Thus, we have 
\[\int_{B(x,R)} |v|^{4+2\delta-\gamma} dy \lesssim R^{\gamma} N^{2+\delta-\gamma} \left(\int_{B(x,R)} |v|^{3(2+\delta)/(1-\delta)} dy\right)^{(1-\delta)/3}.\]
Since $3(2+\delta)/(1-\delta)$ is the Sobolev exponent of $2+\delta$, we apply the Sobolev--Poincar\'e inequality to get 
\begin{align*}
&\left(\int_{B(x,R)} |v|^{3(2+\delta)/(1-\delta)} dy\right)^{(1-\delta)/3} \\
&\lesssim \left(\int_{B(x,R)} |v-(v)_R|^{3(2+\delta)/(1-\delta)} dy\right)^{(1-\delta)/3} + R^{1-\delta} |(v)_R|^{2+\delta} \\
&\lesssim \int_{B(x,R)} |\nabla v|^{2+\delta} dy + R^{1-\delta} |(v)_R|^{2+\delta}
\end{align*}
where $(v)_R = \fint_{B(x,R)} v(y,t) dy$.
Finally, using the Jensen inequality and \eqref{E52} with $q=2+\delta$, we obtain 
\[R^{1-\delta} |(v)_R|^{2+\delta} \lesssim \left(\frac{2+\delta}{1-\delta}\right)^{(2+\delta)/3} N^{2+\delta} R^{-1-2\delta}.\]
Integrating in time yields the estimate \eqref{E62}.
\item
We fix $\psi$ satisfying $\psi(y) = 1$ for $y \in B(x,R/2)$ and $\psi(y) = 0$ for $y \notin B(x,R)$.
We now use the decomposition of a localized pressure \eqref{E31} and \eqref{E32} with this $\psi$, that is,
\[p\psi = p_1 + p_2 + p_3\]
where 
\begin{align*}
p_1(x,t)
&=|\overline{v}|^2\psi(x,t)+ \frac{3}{4\pi} \int \partial_{y_i} \partial_{y_j} \left(\frac{1}{|x-y|}\right)(\overline{v}_i \overline{v}_j\psi)(y,t) dy, \\
p_2(x,t) &= \frac{3}{2\pi} \int \frac{x_i-y_i}{|x-y|^3}
(\overline{v}_i\overline{v}_j\partial_j\psi)(y,t) dy
+ \frac{3}{4\pi} \int \frac{1}{|x-y|} (\overline{v}_i\overline{v}_j\partial_i\partial_j\psi)(y,t) dy, \\
p_3(x,t) &= \frac{3}{4\pi} \int \frac{1}{|x-y|} (p\Delta \psi)(y,t) dy 
+ \frac{3}{2\pi} \int \frac{x_i-y_i}{|x-y|^3} (p\partial_i\psi)(y,t) dy.
\end{align*}
We notice that $p_1$ and $p_2$ involve $v$ only.
Since we have $v \in L^{4+2\delta-\gamma}(Q(z,R))$, \eqref{E61} in the previous step, we see that
\begin{equation}
\label{E63}
p_1 + p_2 \in L^{2+\delta-\gamma/2}(Q(z,R/5))
\end{equation}
by a direct consequence of $L^q$-continuity of singular integral operators and potential estimates. 
On the contrary, the representation of $p_3$ strongly depends on the outward data of $p$, so we don't expect that $p_3$ gain such a higher integrability in time.
But, since $p_3$ is harmonic in $B(x,R/4)$, we have 
\begin{equation}
\label{E64}
p_3 \in L^{5/3}(t-R^2/25,t+R^2/25 : L^\infty(B(x,R/5))).
\end{equation}
\item
In the previous steps we showed that the weak solution is locally higher integrable.
Considering scaled functional with higher exponent, we can get a better bound for the size of the singular set.
Indeed, it is a consequence of scaling structure of weak solutions.
We have for all $0 < r < R/5$, by H\"older's inequality, 
\begin{align*}
r^{-2} \int_{Q(z,r)} |v|^3 dz 
&\lesssim \left(r^{-1+2\delta-\gamma} \int_{Q(z,r)} |v|^{4+2\delta-\gamma} dz\right)^{3/(4+2\delta-\gamma)}, \\
r^{-2} \int_{Q(z,r)} |p_1+p_2|^{3/2} dz 
&\lesssim \left(r^{-1+2\delta-\gamma} \int_{Q(z,r)} |p_1+p_2|^{2+\delta-\gamma/2} dz\right)^{3/(4+2\delta-\gamma)}, 
\end{align*}
and, by Jensen's inequality,  
\begin{align*}
r^3 \fint_{Q(z,r)} |p_3|^{3/2} dz 
&\lesssim r^3 \fint_{t-r^2}^{t+r^2} \left(\fint_{B(x,r)} |p_3|^{15/(7-6\delta+3\gamma)} dx\right)^{(7-6\delta+3\gamma)/10} dt \\
&\lesssim r^3 \left(\fint_{t-r^2}^{t+r^2} \left(\fint_{B(x,r)} |p_3|^{15/(7-6\delta+3\gamma)} dx\right)^{(7-6\delta+3\gamma)/9} dt\right)^{9/10} \\
&= \left(r^{-1+2\delta-\gamma} \int_{t-r^2}^{t+r^2} \left(\int_{B(x,r)} |p_3|^{15/(7-6\delta+3\gamma)} dx\right)^{(7-6\delta+3\gamma)/9} dt\right)^{9/10}.
\end{align*}
Combining those estimates, we obtain 
\begin{align*}
&r^{-2} \int_{Q(z,r)} |v|^3 +|p|^{3/2} dz \\
&\lesssim \left(r^{-1+2\delta-\gamma} \int_{Q(z,r)} |v|^{4+2\delta-\gamma} + |p_1+p_2|^{2+\delta-\gamma/2} dz\right)^{3/(4+2\delta-\gamma)} \\
&\quad + \left(r^{-1+2\delta-\gamma} \int_{t-r^2}^{t+r^2} \left(\int_{B(x,r)} |p_3|^{15/(7-6\delta+3\gamma)} dx\right)^{(7-6\delta+3\gamma)/9} dt\right)^{9/10}.
\end{align*}
There exists a positive number $\vep$ such that if 
\[\left(\int_{Q(z,r)} |v|^{4+2\delta-\gamma} + |p|^{2+\delta-\gamma/2} dz + \int_{t-r^2}^{t+r^2} \left(\int_{B(x,r)} |p_3|^{15/(7-6\delta+3\gamma)} dx\right)^{(7-6\delta+3\gamma)/9} dt\right) < \vep r^{1-2\delta+\gamma}\]
for some $r < R/5$, then $z$ is a regular point by the well-known $L^3$-regularity criterion.
\item
In the previous setp, we obtain that for each $z=(x,t) \in \cS \cap Q(z_0,R/10)$ we should have for all $0 < r < R/50$ 
\begin{equation}
\label{E65}
\begin{split}
\vep r^{1-2\delta+\gamma} 
&\le \int_{Q(z,r)} |v|^{4+2\delta-\gamma} + |p|^{2+\delta-\gamma/2} dz \\
&\quad + \int_{t-r^2}^{t+r^2} \left(\int_{B(x,r)} |p_3|^{15/(7-6\delta+3\gamma)} dx\right)^{(7-6\delta+3\gamma)/9} dt.
\end{split}
\end{equation}
Fix $r < R/50$ and consider the covering $\set{Q(z,r) : z \in \cS \cap Q(z_0,R/10)}$.
Adapting the argument in \cite{MR2864801}, we can choose a finite disjoint sub-family 
\[\set{Q(z_j,r) : j=1,2,\dots,J(r)}\]
such that $\cS \cap Q(z_0,R/10) \subset \bigcup Q(z_j,5r)$ by the Vitali covering lemma.
We need to pay more attention in order to use the disjointness of the subcovering.
Summing the inequality \eqref{E65} at $z_j$ for $j=1,2,\dots,J$ yields
\begin{align*}
J \vep r^{1-2\delta+\gamma}
&\le \sum_{j=1}^{J} \int_{Q(z_j,r)} |v|^{4+2\delta-\gamma} + |p|^{2+\delta-\gamma/2} dz \\
&\quad + \sum_{j=1}^{J} \int_{t_0-R^2/25}^{t_0+R^2/25} \left(\int_{B(x_j,r)} |p_3|^{15/(7-6\delta+3\gamma)} dx\right)^{(7-6\delta+3\gamma)/9} dt \\
&\le \int_{Q(z_0,R/5)} |v|^{4+2\delta-\gamma} + |p|^{2+\delta-\gamma/2} dz \\
&\quad + \int_{t_0-R^2/25}^{t_0+R^2/25} \sum_{j=1}^{J} \left(\int_{B(x,r)} |p_3|^{15/(7-6\delta+3\gamma)} dx\right)^{(7-6\delta+3\gamma)/9} dt 
\end{align*}
For the last sum, we use Young's inequality, that is, for $\theta \in [0,1]$ 
\[\sum_{j=1}^{J} a_j^\theta \le J^{1-\theta} \left(\sum_{j=1}^{J} a_j\right)^\theta.\]
Due to the fact that $p_3$ is harmonic in $B(z_0,R/5)$ and the estimates \eqref{E63} and \eqref{E64}, there are positive numbers $A_1, A_2 \in \R_+$ such that 
\begin{equation}
\label{E66}
\begin{split}
J \vep r^{1-2\delta+\gamma}
&\le \int_{Q(z_0,R/5)} |v|^{4+2\delta-\gamma} + |p|^{2+\delta-\gamma/2} dz \\
&\quad + J^{(2+6\delta-3\gamma)/9} \int_{t_0-R^2/25}^{t_0+R^2/25} \left(\int_{B(x_0,R/5)} |p_3|^{15/(7-6\delta+3\gamma)} dx\right)^{(7-6\delta+3\gamma)/9} dt \\
&=: A_1+A_2 J^{(2+6\delta-3\gamma)/9}.
\end{split}
\end{equation}
\item
The minimum number of parabolic cylinders $Q(z,r)$ required to cover the set $\cS \cap Q(z,R/5)$ is less than or equal to $J$.
We rewrite \eqref{E66} as 
\[J \vep r^{C_1} \le A_1 + A_2 J^{C_2}\]
where $C_1=1-2\delta+\gamma$ and $C_2=(2+6\delta-3\gamma)/9$.
Applying Young's ineaulity to $A_2 J^{C_2}$, we get 
\[J \vep r^{C_1} \le A_1 + (1-C_2) [A_2 (\vep r^{C_1})^{-C_2}]^{1/(1-C_2)} + C_2 J \vep r^{C_1}\]
and so 
\[J(r) \lesssim A_1 \vep^{-1} r^{-C_1} + A_2^{1/(1-C_2)} \vep^{-1/(1-C_2)} r^{-C_1/(1-C_2)}.\]
Therefore, by an elementary calculation we get 
\[\limsup_{r\to0} \frac{\log J(r)}{-\log r} = \frac{C_1}{1-C_2} = \frac{9-18\delta+9\gamma}{7-6\delta+3\gamma}.\]
Since $\delta$ and $\gamma$ can be arbitrarily close to $\delta_0$ and $0$ respectively, we conclude the assertion \eqref{E15}.
\end{enumerate}
This completes the proof of Theorem \ref{T13}.

%

\end{document}